\documentclass[a4paper,reqno,12pt]{amsart}
\usepackage[english]{babel}
\usepackage[T1]{fontenc}
\usepackage[utf8]{inputenc}
\usepackage{mathptmx,amsmath,amssymb}
\usepackage{fullpage,microtype,needspace}
\usepackage[hidelinks]{hyperref}
\hypersetup{pdftitle={An elementary proof that sets of cardinality seven are not sum-dominant},pdfauthor={Thai Duong Do and Van Thien Nguyen}}
\newtheorem{theorem}{Theorem}[section]
\newtheorem{lemma}[theorem]{Lemma}
\newtheorem{proposition}[theorem]{Proposition}

\numberwithin{equation}{section}
\newcommand{\R}{\mathbb R}

\allowdisplaybreaks[1]

\begin{document}
\author{Thai Duong Do\textit{$^{1,2}$}}
\address{$^{1}$Institute for Artificial Intelligence, VNU University of Engineering and Technology, Hanoi, Vietnam.}
\address{$^{2}$Department of Mathematics, National University of Singapore, 10 Lower Kent Ridge Road, Singapore, 119076, Singapore.}
\email{dtduong@vnu.edu.vn, dtduong@nus.edu.sg}
\author{Van Thien Nguyen\textit{$^{3}$}}
\address{$^{3}$FPT University, Education and Training Zone - Hoa Lac High-Tech Park - Km29 Thang Long Boulevard, Hoa Lac Commune, Hanoi City, Viet Nam.}
\email{thiennv15@fe.edu.vn}
\title[Sets of cardinality seven are not sum-dominant]{An elementary proof that sets of cardinality seven are not sum-dominant}
\subjclass[2020]{11B13, 11B75}
\keywords{Sum-dominant set, MSTD set, sumset, difference set, additive energy}
\date{}

\begin{abstract}
We give a self-contained elementary proof of the known result that
every set $A\subset\R$ of cardinality seven satisfies
$|A+A|\le |A-A|$. The argument adapts Hegarty's method, using
representation counts and the largest positive differences. An exact
counting identity, two applications of the Cauchy--Schwarz inequality,
and an analysis of a symmetric six-element set with one point added
complete the proof, with no computer enumeration. This answers in the
affirmative a question of Chu for the seven-element case.
\end{abstract}
\maketitle

\section{Introduction}

For a finite set $A\subset\R$, write
\[
A+A=\{a+b:a,b\in A\},\qquad A-A=\{a-b:a,b\in A\}.
\]
The set $A$ is called \emph{sum-dominant}, or an \emph{MSTD set}, if
$|A+A|>|A-A|$. Although a difference set is symmetric about zero,
this does not imply that its cardinality is at least that of the sumset. The first explicit example of an MSTD set of cardinality eight is
due to Nathanson~\cite{Nathanson07}, who constructed the set
$\{0,2,3,4,7,11,12,14\}$ with $|A+A|=26$ and $|A-A|=25$.
The existence and the construction of MSTD sets have been studied in
additive number theory; see, for example, Hegarty~\cite{Hegarty07}.
Recent work includes constructions of further families of MSTD sets
by Kumar, Mohan, and Pandey~\cite{KMP24}, who also disproved a
conjecture about a family proposed by Chu and collaborators.
Herrmann, Hill, Phillips, Flores, Miller, and Senger~\cite{HH26}
constructed increasing sequences of sets that alternate between
sum-dominance and difference-dominance while preserving gaps within
the range of each earlier set.

The same definition applies to finite subsets of any abelian group.
Zhao~\cite{Zhao10} studied the asymptotic enumeration of
MSTD sets in finite abelian groups. Penman and Wells~\cite{PW14}
classified the finite abelian groups that contain sum-dominant sets,
and those that contain restricted-sum-dominant sets. For the latter,
the sumset is formed using distinct summands and is required to have
larger cardinality than the ordinary difference set.

Hegarty~\cite[Theorem~1]{Hegarty07} proved by a computer-assisted
argument that no seven-element set of integers is sum-dominant, and
classified the MSTD sets of cardinality eight up to affine equivalence.
Nathanson~\cite{Nathanson18} showed that every finite real set has
a Freiman-isomorphic integer model that preserves the cardinalities
of both the sumset and the difference set; in particular, he deduced
the nonexistence of real MSTD sets of cardinality at most seven.
Chu~\cite{Chu20} gave a computer-free proof for cardinality six and,
in Section~7 of that paper, asked for such a proof in the seven-element
case.

In this paper, we prove the following result by an elementary argument.

\begin{theorem}\label{thm:main}
Let $A\subset\R$ be a set of cardinality seven. Then
\[
|A+A|\le |A-A|.
\]
\end{theorem}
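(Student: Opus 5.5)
We argue by contradiction, following Hegarty's representation-count approach. Suppose $|A|=7$ and $|A+A|>|A-A|$. Write $A=\{a_1<\dots<a_7\}$ and, after an affine normalization, take $a_1=0$. For $x\in\R$ let
\[
r_{+}(x)=\#\{(a,b)\in A^2:a+b=x\},\qquad r_{-}(x)=\#\{(a,b)\in A^2:a-b=x\}.
\]
The first step is an exact counting identity. We have $\sum_x r_+(x)=\sum_x r_-(x)=49$, and the additive energy satisfies
\[
E(A)=\sum_x r_+(x)^2=\sum_x r_-(x)^2 .
\]
Also $r_-(0)=7$, while for $x\neq 0$ the symmetry $r_-(x)=r_-(-x)$ lets us count only the positive differences. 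Let $D^+$ be the set of positive differences, so $|A-A|=2|D^+|+1$. Each sum $a_i+a_j$ with $i<j$ is counted twice in $r_+$, and each $2a_i$ once. Cauchy--Schwarz then gives $|A+A|\ge 49^2/E$ and $|A-A|\ge 49^2/E$. This is too weak on its own, so we use the exact identity in the form
\[
\sum_{x}\binom{r_-(x)}{2}\ \text{vs.}\ \sum_x\binom{r_+(x)}{2}.
\]
This records the coincidences $a-b=c-d$, equivalently $a+d=b+c$. From it we deduce that sum-dominance forces $|A-A|\le 2\cdot 13+1$ with $|A+A|\ge |A-A|+1$. Since $|A+A|\le 28$, we only need to rule out a short list of pairs $(|A+A|,|A-A|)$. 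The counting shows that the number of "lost" differences is at most the number of lost sums minus one.

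The second step uses the largest positive differences. The differences $a_7-a_1>a_7-a_2,\ a_6-a_1>\dots$ are forced to be distinct in a controlled way: the top few differences $a_7-a_1$, $\max(a_7-a_2,a_6-a_1)$, and so on have representation count $1$ unless specific equalities among the gaps hold. Meanwhile the extreme sums $2a_1,a_1+a_2,2a_7,a_6+a_7$ are always singly represented. Comparing these extreme contributions, the first Cauchy--Schwarz is applied to the interior sums and the second to the interior differences. Together they show that the deficit of differences must be concentrated. Concretely, many coincidences $a_i-a_j=a_k-a_l$ must occur while few coincidences $a_i+a_j=a_k+a_l$ that are not of the trivial form occur. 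This is only possible if six of the elements form a set symmetric about its midpoint. For such a symmetric set $B$ we have $B+B=c+(B-B)$, so the sumsets and difference sets match in size.

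The final step analyzes $A=B\cup\{t\}$ with $B$ a symmetric six-element set, $B=\{-x_3,-x_2,-x_1,x_1,x_2,x_3\}$ up to translation. Here $|B+B|=|B-B|$. Adding $t$ contributes the new sums $t+B$ and $2t$, at most $7$ of them, and the new differences $\pm(t-B)$, at most $12$ but counted in symmetric pairs. We must show that the number of genuinely new sums is at most the number of genuinely new differences. For this we pair each new sum $t+b$ with the differences $t-b'$ where $b'=-b$ is the reflection of $b$. The claim is that whenever $t-b'$ is already in $B-B$, a corresponding coincidence also kills a sum. This requires a case split on which of the finitely many linear relations among $t,x_1,x_2,x_3$ hold.

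I expect the main obstacle to be the second step, namely extracting the symmetric-six-point structure from the inequalities. The Cauchy--Schwarz bounds have little slack at $n=7$, and the near-equality cases must be classified by hand.
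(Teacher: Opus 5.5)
Your outline has the right ingredients: an energy identity, Cauchy--Schwarz, the largest differences, and a symmetric six-point set plus one point. But the two steps that carry the proof are asserted, not proved.

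\textbf{The case with no symmetric six-element subset.} The sentence ``this is only possible if six of the elements form a set symmetric about its midpoint'' is the entire content of this case, and nothing before it yields an inequality. The comparison of $\sum\binom{r_-(x)}{2}$ with $\sum\binom{r_+(x)}{2}$ is never made quantitative, and the statement about ``lost'' differences is not derived from anything.

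The missing idea is how the symmetric-subset dichotomy feeds the counting. If $A$ has no symmetric six-element subset, then no sum $s$ has three representations $a+b=s$ with $a<b$. Three such pairs would be disjoint, and their union would be invariant under $a\mapsto s-a$. Only under this restriction do you get the exact identity $\sum_{d>0}\binom{m_d}{2}=2R-h$. Here $R=28-|A+A|$, and $h$ counts the sums $s$ with $s/2\in A$ and exactly one representation by distinct summands. A sum with three such representations contributes more to the left side than to $2R$, so the identity fails without the restriction.

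Sum-dominance then gives $\sum\binom{m_d}{2}\le 52-4k$ with $k=|D^+|$. Cauchy--Schwarz forbids two uniquely represented positive differences, since that would force $8k^2-139k+607\le0$, which has no real solution. So your second step points the wrong way: the top differences are \emph{not} unique. Their forced repetition yields $a_6-a_1=a_7-a_2$. Then, up to reflection, it yields $a_3-a_1=2(a_2-a_1)$; the remaining option $a_7-a_3=a_5-a_1$ would produce a symmetric six-subset. This gives $h\ge1$. Only with this extra unit does removing the multiplicities $1,2,2$ and applying Cauchy--Schwarz again give the impossible inequality $4k^2-69k+299\le0$. Without it, that inequality still allows $k\in\{8,9\}$. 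The slack is that small, so an unquantified ``the deficit must be concentrated'' cannot replace this computation.

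\textbf{The symmetric case.} Your pairing is actually an identity. With $B=-B$ one has $t+b=t-(-b)$ and $B+B=B-B$. Put $U=(t+B)\setminus(B+B)$. The new sums are $U$ together with possibly $2t$. The new differences are $U\cup(-U)$, and this union is disjoint when $2t\notin B+B$.

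Everything therefore reduces to ruling out $t+B\subseteq B+B$ together with $2t\notin B+B$, which would give exactly one extra sum and no extra difference. The sum $2t$ has no partner in your pairing, and this configuration is the only real content of the case. An unspecified case split over linear relations among $t,x_1,x_2,x_3$ does not settle it.

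The paper handles it structurally. The set $C=B-t$ satisfies $C\subseteq C+C$ and $0\notin C+C$, which forces $C$ to have exactly three positive and three negative elements. It is also invariant under $c\mapsto-2t-c$. Examining the representations of $\max C$, of $\min C$, and of the middle positive element then forces some $c$ and $-c$ to lie in $C$. This contradicts $0\notin C+C$.
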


The conclusion of Theorem~\ref{thm:main}, including its formulation
for real sets, is therefore known. Our contribution is a self-contained
elementary proof of the seven-element case, of the type requested in
\cite[Section~7]{Chu20}. All steps are proved directly over $\R$.

The method is inspired by the combinatorial argument in
\cite[Section~2]{Hegarty07}. Hegarty considers sums with three
representations on six distinct elements, counts pairs of equal
positive differences, and separates cases according to whether the
largest difference is the only one represented uniquely; his analysis
of the next largest differences yields the configurations used in his
classification. We adapt these steps to seven elements, retaining
the contribution of repeated summands in an exact counting identity
and using two Cauchy--Schwarz estimates to handle the case in which
no symmetric six-element subset exists.

Let us outline the argument. A set $B\subset\R$ is symmetric if
$B=c-B$ for some $c\in\R$. Adjoining one point to a symmetric set is a standard construction of MSTD sets; see Nathanson~\cite[Section~2]{Nathanson07}. We first show directly that this construction cannot produce an MSTD set when $|B|=6$. Suppose then that a seven-element MSTD set contains no such symmetric subset.
This bounds the number of representations of each sum by pairs of distinct elements, and an exact counting identity together with the Cauchy--Schwarz inequality implies that only the largest positive difference can have a unique representation. The next two largest
differences then determine enough of the set to produce a contradiction by a second application of Cauchy--Schwarz.

Section~\ref{sec:counts} records the counting identity.
Section~\ref{sec:symmetry} treats symmetric sets with one point added.
We prove Theorem~\ref{thm:main} in Section~\ref{sec:proof}.

\Needspace{8\baselineskip}
\section{Counting representations}\label{sec:counts}

Representations of a positive difference are ordered pairs, whereas representations of a sum are unordered unless otherwise stated; repeated summands are allowed throughout.

Let $A\subset\R$ be finite and nonempty, and put $n=|A|$. Define
\[
D=(A-A)\cap(0,\infty),\qquad k=|D|,
\qquad m_d=\#\{(a,b)\in A^2:a-b=d\}\quad(d\in D).
\]
Every unordered pair of distinct elements gives exactly one positive
difference. Consequently,
\begin{equation}\label{eq:basic-differences}
\sum_{d\in D}m_d=\binom n2,
\qquad |A-A|=2k+1.
\end{equation}
For $s\in A+A$, let
\[
p_s=\#\{(a,b)\in A^2:a<b,\ a+b=s\},\qquad
\varepsilon_s=
\begin{cases}
1,&s/2\in A,\\
0,&s/2\notin A.
\end{cases}
\]
Thus $p_s$ counts representations with distinct summands, and
$p_s+\varepsilon_s$ is the total number of unordered representations.
Set
\begin{equation}\label{eq:RE}
R=\frac{n(n+1)}2-|A+A|,
\qquad E=\sum_{d\in D}\binom{m_d}{2}.
\end{equation}
Here $R$ measures the loss of distinct sums from the maximum possible
number, while $E$ counts unordered pairs of distinct representations
of the same positive difference. In general, $E$ is not equal to
$\binom n2-k=\sum_{d\in D}(m_d-1)$: a difference of multiplicity
three contributes three to $E$ but only two to the latter sum.

The following identity is a form of the usual equality between the
sum and difference expressions for additive energy. We give a direct
counting proof and keep track of representations with equal summands.

\begin{lemma}\label{lem:counting}
With the notation above,
\begin{align}
R&=\sum_{s\in A+A}(p_s+\varepsilon_s-1),\label{eq:Rsum}\\
E&=\sum_{s\in A+A}\left(2\binom{p_s}{2}
                  +p_s\varepsilon_s\right).\label{eq:collision}
\end{align}
If $p_s\le2$ for all $s\in A+A$, then
\begin{equation}\label{eq:exact}
E=2R-h,
\qquad h=\#\{s\in A+A:p_s=1,\ \varepsilon_s=1\}.
\end{equation}
\end{lemma}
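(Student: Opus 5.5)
The plan is to prove the three statements in order: \eqref{eq:Rsum} by a direct count, \eqref{eq:collision} by a bijective-style double count, and \eqref{eq:exact} by comparing \eqref{eq:Rsum} and \eqref{eq:collision} term by term.

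\textbf{The identity \eqref{eq:Rsum}.} The total number of unordered pairs $\{a,b\}$ with $a,b\in A$, repetition allowed, is $\binom n2+n=n(n+1)/2$. Grouping these pairs by their sum $s$ gives $\sum_{s\in A+A}(p_s+\varepsilon_s)=n(n+1)/2$. Subtracting $|A+A|=\sum_{s}1$ yields \eqref{eq:Rsum}.

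\textbf{The identity \eqref{eq:collision}.} I will count the set $\mathcal C$ of unordered pairs of distinct ordered pairs $(a,b)\ne(c,e)$ in $A^2$ with $a-b=c-e>0$; by definition $|\mathcal C|=E$.

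Take such a collision. Then $a+e=b+c=:s$, so we obtain two unordered representations $\{a,e\}$ and $\{b,c\}$ of $s$. These are distinct: $a=b$ is impossible since $a-b>0$, and $a=c$ would force $b=e$, contradicting distinctness.

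Conversely, fix $s$ and two distinct unordered representations $\{x,y\}$, $\{u,v\}$ of $s$ with $x\le y$ and $u\le v$. Since they are distinct, we may order them so that $x<u\le v<y$. I then list all ways to split them as $(a,e)$ and $(b,c)$ with $a-b=c-e>0$. Checking the sign in each case, exactly two collisions arise:
\[
\{(y,u),(v,x)\}\qquad\text{and}\qquad\{(y,v),(u,x)\}.
\]
Every other assignment gives a negative difference, or the same unordered collision with the roles of the two pairs swapped. These two collisions are distinct when $u<v$ and coincide when $u=v$.

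Hence a pair of representations of $s$ both having distinct summands contributes $2$ to $E$. A pair in which one representation is $\{s/2,s/2\}$ contributes $1$. Such pairs number $\binom{p_s}{2}$ and $p_s\varepsilon_s$ respectively, which gives \eqref{eq:collision}.

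The only delicate point, and the step I expect to be the main obstacle, is making this correspondence rigorous. One must check that the map from collisions to (sum, pair of representations) together with the choice of splitting is well defined and exhausts every collision exactly once. I will handle this by the explicit ordering $x<u\le v<y$, which makes the case check finite and transparent.

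\textbf{The identity \eqref{eq:exact}.} Assuming $p_s\le 2$ for every $s$, I compare \eqref{eq:collision} with twice \eqref{eq:Rsum} term by term, that is, I compute
\[
2\binom{p_s}{2}+p_s\varepsilon_s-2(p_s+\varepsilon_s-1).
\]
The five possible cases give:
\begin{itemize}
\item $(p_s,\varepsilon_s)=(0,1)$: value $0$;
\item $(1,0)$: value $0$;
\item $(1,1)$: value $1-2=-1$;
\item $(2,0)$: value $2-2=0$;
\item $(2,1)$: value $2+2-4=0$.
\end{itemize}
Summing over $s\in A+A$ gives $E-2R=-h$, which is \eqref{eq:exact}.
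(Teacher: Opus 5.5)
Your proposal is correct and follows essentially the same route as the paper. You prove \eqref{eq:Rsum} by partitioning the unordered pairs by their sum, and you prove \eqref{eq:collision} by matching each collision $a-b=c-e>0$ with the pair of representations $\{a,e\},\{b,c\}$ of $s$; this gives two collisions per pair of distinct-summand representations and one when a representation is $\{s/2,s/2\}$. Your ordering $x<u\le v<y$ does the job of the paper's split into four-element unions ($w+z=x+y$) and three-element unions ($w+z=2y$), and your five-case comparison for \eqref{eq:exact} is the paper's table.
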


\begin{proof}
There are $n(n+1)/2$ unordered pairs of elements of $A$, including
pairs with equal entries. Partitioning them according to their sum
gives~\eqref{eq:Rsum}.

To prove~\eqref{eq:collision}, consider two distinct representations
of a positive difference,
\[
a-b=c-e>0.
\]
They give the sum equality $a+e=b+c$. The two pairs cannot have the
same larger entry or the same smaller entry, because either equality
would make the representations identical. Hence their union has
either three or four elements.

If there are four elements, write them as $w<x<y<z$. The only
equality between two disjoint pairs with the same sum is
$w+z=x+y$: the smallest element must be paired with the largest.
Conversely, this equality gives exactly two unordered pairs of
representations of positive differences,
\[
x-w=z-y,\qquad y-w=z-x.
\]
For a fixed sum $s$, there are $\binom{p_s}{2}$ choices of two
distinct representations with unequal summands. Their summand pairs
are disjoint, since a common summand would force the other summands
to agree. Thus their contribution is $2\binom{p_s}{2}$.

If there are three elements, write them as $w<y<z$. The equality is
$w+z=2y$, and it gives exactly one pair of representations,
$y-w=z-y$. For the sum $s=2y$, there are $p_s\varepsilon_s$ such
choices. These two cases exhaust the pairs counted by $E$, proving
\eqref{eq:collision}.

Suppose now that $p_s\le2$. Since $s\in A+A$, the pair
$(p_s,\varepsilon_s)$ cannot be $(0,0)$. Its five possible values
give the following contributions:
\[
\begin{array}{c|ccccc}
(p_s,\varepsilon_s)&(0,1)&(1,0)&(1,1)&(2,0)&(2,1)\\ \hline
p_s+\varepsilon_s-1&0&0&1&1&2\\
2\binom{p_s}{2}+p_s\varepsilon_s&0&0&1&2&4
\end{array}
\]
Twice the contribution to $R$ minus the contribution to $E$ is one
exactly for $(1,1)$, and is zero otherwise. Summing proves~\eqref{eq:exact}.
\end{proof}

We shall also use the following elementary consequence of
Cauchy--Schwarz. If $r\ge1$ and $b_1,\ldots,b_r$ are positive
integers with $b_1+\cdots+b_r=M$, then
\begin{equation}\label{eq:cauchy}
\sum_{i=1}^r\binom{b_i}{2}
=\frac12\left(\sum_{i=1}^r b_i^2-M\right)
\ge\frac12\left(\frac{M^2}{r}-M\right).
\end{equation}

\section{A symmetric set with one point added}\label{sec:symmetry}

We first record a restriction on the signs in a finite set whose
elements are sums of elements of the same set.

\begin{lemma}\label{lem:signs}
Let $C\subset\R$ be finite and nonempty. If
\begin{equation}\label{eq:sumfull}
C\subseteq C+C,\qquad 0\notin C+C,
\end{equation}
then $C$ has at least three positive elements and at least three
negative elements.
\end{lemma}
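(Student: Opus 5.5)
By the symmetry $C\mapsto -C$, which preserves both conditions in \eqref{eq:sumfull}, it suffices to show that $C$ has at least three positive elements. First, $0\notin C$, since $0\in C$ would give $0=0+0\in C+C$. Next, $C$ cannot consist only of positive elements. Indeed, if $C\subset(0,\infty)$, then the smallest element $c_{\min}$ lies in $C+C$, so $c_{\min}=x+y$ with $x,y\in C$. This gives $c_{\min}>x\ge c_{\min}$, a contradiction. The same argument applied to the largest element shows that $C$ cannot consist only of negative elements. Hence $C$ has both positive and negative elements. Write $P=C\cap(0,\infty)=\{p_1<\dots<p_r\}$ with $r\ge1$.

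The key step is to look at the largest element $p_r$ of $C$. Write $p_r=x+y$ with $x,y\in C$ and $x\le y$. If $y<p_r$, then $x>0$, so both $x$ and $y$ are positive and strictly less than $p_r$. If $y=p_r$, then $x=0$, which is impossible. So $p_r$ is a sum of two positive elements of $C$, each smaller than $p_r$. In particular $r\ge2$.

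To get $r\ge3$, suppose $r=2$. Then the positive elements $x\le y$ with $x+y=p_2$ and $y<p_2$ must both equal $p_1$, so $p_2=2p_1$. Now consider how $p_1$ lies in $C+C$. Write $p_1=x+y$ with $x\le y$. If both $x$ and $y$ are positive, then $x\ge p_1$ forces $y\le 0$, a contradiction. So $x<0<y$ and $y>p_1$, hence $y=p_2=2p_1$ and $x=-p_1$. Therefore $-p_1\in C$. But then $0=p_1+(-p_1)\in C+C$, contradicting \eqref{eq:sumfull}. Thus $r\ge3$.

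The main obstacle is this last case analysis. The extreme-element argument alone only yields $r\ge2$, and one has to see that in the case $r=2$ the smallest positive element, not the largest, is the element whose representation produces the forbidden zero sum. After that, the negative side follows from the symmetry $C\mapsto -C$.
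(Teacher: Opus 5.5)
Your proof is correct and follows essentially the paper's route. You exclude $0\in C$ and an all-negative $C$ via extreme elements, show that the largest positive element is a sum of two smaller positive elements, and in the case $p_1<p_2=2p_1$ use a representation of $p_1$ to force $-p_1\in C$ and hence $0\in C+C$, finishing with $C\mapsto -C$; the only difference is that you get $r\ge2$ from the largest-element argument instead of treating a single positive element as a separate case, a slight streamlining.
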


\begin{proof}
The second condition implies $0\notin C$, since otherwise $0=0+0$
would belong to $C+C$.
If $C$ has no positive element, let $v=\max C<0$. Every sum of two
elements of $C$ is at most $2v<v$, contradicting $v\in C+C$.
If $p$ is the only positive element, a sum of two negative elements
is negative, a sum of $p$ and a negative element is smaller than $p$,
and $p+p>p$. Thus $p\notin C+C$, again a contradiction.

Suppose that $C$ has exactly two positive elements $p<q$.
A representation of $q$ cannot have a negative summand, since the
other summand is at most $q$. Both summands must therefore be
positive. Neither can be $q$, so $q=2p$.
Now a representation of $p$ cannot have two positive summands or
two negative summands. Its positive summand cannot be $p$, since
zero is absent. It must be $q$, and the other summand is
$p-q=-p$. Hence $-p\in C$ and $0=p+(-p)\in C+C$, a contradiction.
Applying the same argument to $-C$ gives the assertion about negative
elements, since $-C\subseteq(-C)+(-C)$ and $0\notin(-C)+(-C)$.
\end{proof}

\begin{proposition}\label{prop:symmetric}
Let $B\subset\R$ be symmetric with $|B|=6$, and let $x\notin B$.
Then $B\cup\{x\}$ is not sum-dominant.
\end{proposition}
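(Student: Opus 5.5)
We normalize so that the centre of symmetry is $0$, i.e.\ $B=-B$. Since $|B|=6$ is even, this gives $0\notin B$, and $B+B=B-B$. Put $A=B\cup\{x\}$. Because $x+B=x-B$, we get
\[
A+A=(B+B)\cup(x+B)\cup\{2x\},\qquad A-A=(B+B)\cup(x+B)\cup(B-x)\cup\{0\}.
\]
Hence $A+A\subseteq (A-A)\cup\{2x\}$, so $|A+A|\le|A-A|+1$. Equality $|A+A|=|A-A|+1$, i.e.\ sum-dominance, holds exactly when two conditions hold:
\begin{itemize}
\item[(i)] $2x\notin A-A$;
\item[(ii)] $A-A\subseteq A+A$, which amounts to $B-x\subseteq (B+B)\cup(x+B)\cup\{2x\}$.
\end{itemize}
We assume both hold and aim for a contradiction.

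Condition (i) says $2x\notin B+B$, $2x\notin x+B$ (so $x\notin B$, which is given), and $2x\notin B-x$, so $3x\notin B$. Condition (ii) says that each $b\in B$ falls into one of three cases:
\begin{itemize}
\item[(a)] $b-x\in B+B$;
\item[(b)] $b-2x\in B$;
\item[(c)] $b=3x$.
\end{itemize}
Case (c) is excluded by (i). The plan is to extract from these relations a finite set $C$ satisfying \eqref{eq:sumfull}. Lemma~\ref{lem:signs} then forces $C$ to have at least three positive and at least three negative elements, and the final step shows that this cannot be accommodated inside $|B|=6$ together with (i). My first candidate is to rescale by $x$ (we may take $x>0$ after reflecting) and look at $B$ itself. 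Relation (a), rewritten as $b=b_1+b_2+x$, and relation (b), rewritten as $b=b'+2x$, express elements of $B$ as ``sums'' in a shifted sense. Applying the symmetry $b\mapsto -b$ to the relations turns them into statements about $-b$. This should let us combine them into an identity of the form $C\subseteq C+C$, for $C$ either $B$ itself or a suitable translate or subset of $B\cup\{\pm x\}$. The condition $0\notin C+C$ should then come from $2x\notin B+B$, or from $0\notin B$.

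Once Lemma~\ref{lem:signs} applies, $C$ contains at least three positive and three negative elements. Using $|B|=6$ and the symmetry, these must be exactly the three positive elements $p_1<p_2<p_3$ of $B$ and their negatives. We would then examine the largest element $p_3$. Relation (b) fails for it when $x>0$ unless $p_3-2x\in B$, and relation (a) requires $p_3-x=b_1+b_2$. Chasing the extreme elements, as in the proof of Lemma~\ref{lem:signs}, should force some equality $2x=b_1+b_2$ or $3x\in B$, contradicting (i).

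The main obstacle is identifying the right set $C$. The shift by $x$ in relations (a) and (b) does not literally give $C\subseteq C+C$. I would first try $C=(B-x)\setminus(x+B)$, whose elements must lie in $B+B$ by (ii). If that fails, I would fall back on a direct case analysis of which of the six elements of $B$ satisfy (a) versus (b), ordered by size, with the largest and smallest elements of $B$ handled first.
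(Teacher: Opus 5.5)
You correctly reduce sum-dominance to conditions (i) and (ii), but the proposal stops where the real work begins. It also misses the observation that makes Lemma~\ref{lem:signs} applicable. Case (b) can never occur under (i): if $b-2x=b'\in B$, then $2x=b-b'\in B-B=B+B$. So (ii) collapses to $B-x\subseteq B+B$. Negating both sides (using $B=-B$, hence $B+B=-(B+B)$) turns this into $x+B\subseteq B+B$. For $C=B-x$ one has $C+C=(B+B)-2x$. Thus $x+B\subseteq B+B$ is exactly $C\subseteq C+C$, and $2x\notin B+B$ is exactly $0\notin C+C$. Your candidate $(B-x)\setminus(x+B)$ is in fact all of $B-x$, since the removed intersection is empty by the same computation. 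But the fact you record, that its elements lie in $B+B$, is not $C\subseteq C+C$; the reflection step is what is missing. Your other candidate, $C=B$, can never work, since $0=b+(-b)\in B+B$.

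The endgame is also missing. Lemma~\ref{lem:signs} by itself gives no contradiction: it only says that the six-element set $C$ has exactly three positive and three negative elements. You then identify these with the positive elements of $B$ and their negatives. But $C$ is a translate of $B$, and its symmetry is $c\mapsto -2x-c$, not $c\mapsto -c$. What is needed is an actual argument, for instance the following. Put $t=2x$. Invariance under $c\mapsto -t-c$ gives $C=\{p,q,r,-p-t,-q-t,-r-t\}$ with $0<p<q<r$. The extreme elements $r$ and $-r-t$ can only be written using two summands of their own sign, neither equal to themselves. Hence $r$ and $r-t$ both lie in the progression $\{2p,p+q,2q\}$. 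If they are consecutive terms, then $t=q-p$ and $-q=-p-t\in C$, giving $0\in C+C$. Otherwise $r=2q$ and $t=2(q-p)$. A representation of $q$ then forces $q=2p$, because a negative summand would have to be $q-r=-q$. Hence $-q-t=-r\in C$, and again $0\in C+C$. The phrase ``chasing the extreme elements should force some equality'' does not supply this. As written, the proposal establishes neither the reduction to \eqref{eq:sumfull} nor the final contradiction.
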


\begin{proof}
Write $B=c-B$. Subtracting $c/2$ from all elements of $B\cup\{x\}$
does not change the cardinalities of its sumset and difference set.
We may therefore assume $B=-B$.
If $x=0$, then $B\cup\{x\}$ is symmetric and has equally many
sums and differences. Otherwise, reflecting all elements if
necessary, we may assume $x>0$.
Put
\[
A=B\cup\{x\},\qquad T=B+B=B-B,\qquad U=(x+B)\setminus T.
\]
Since $B=-B$, we have $T=-T$ and
\begin{equation}\label{eq:adjoin}
\begin{split}
A+A&=T\cup(x+B)\cup\{2x\},\\
A-A&=T\cup(x+B)\cup(-x+B).
\end{split}
\end{equation}
Here the difference $x-x=0$ is already in $T$, since $B$ is nonempty.
If $2x\in T$, these expressions show $A+A\subseteq A-A$.
It remains to consider $2x\notin T$.

In this case $(x+B)\cap(-x+B)=\varnothing$. Indeed, an equality
$x+b=-x+b'$ would imply $2x=b'-b\in B-B=T$.
Moreover, $2x\notin x+B$ because $x\notin B$.
The part of $-x+B$ outside $T$ is $-U$, which is disjoint from $U$.
Thus~\eqref{eq:adjoin} gives
\[
|A+A|=|T|+|U|+1,\qquad |A-A|=|T|+2|U|.
\]
Suppose that $A$ is sum-dominant. Then $1>|U|$, so $U=\varnothing$.
In particular,
\[
x+B\subseteq B+B,\qquad 2x\notin B+B.
\]
For $C=B-x$, we have
\[
(x+B)-2x=C,\qquad (B+B)-2x=C+C.
\]
Thus these two conditions give exactly~\eqref{eq:sumfull}.

By Lemma~\ref{lem:signs}, $C$ has exactly three positive elements
and three negative elements. Write its positive elements as
$0<p<q<r$, and set $t=2x>0$. The map $c\mapsto-t-c$ preserves
$C$, because $B=-B$. It maps the three positive elements to three
distinct negative elements, which must be all its negative elements.
Therefore
\begin{equation}\label{eq:shapeC}
C=\{p,q,r,-p-t,-q-t,-r-t\}.
\end{equation}

We next use the representations of the largest and smallest elements
of $C$. A representation of $r$ cannot contain a negative element:
the other summand is at most $r$. It also cannot contain $r$,
because the other summand would be zero. Hence
$r\in\{2p,p+q,2q\}$.
Similarly, a representation of $-r-t$ must use two negative
elements: adding a positive element to any element of $C$ gives
a number strictly greater than $-r-t$.
Neither summand can equal $-r-t$, because the other would be zero.
Its two summands therefore lie in $\{-p-t,-q-t\}$, and
\[
-r-t\in\{-2p-2t,-p-q-2t,-2q-2t\}.
\]
Consequently,
\begin{equation}\label{eq:threevalues}
r\in\{2p,p+q,2q\},\qquad r-t\in\{2p,p+q,2q\}.
\end{equation}

The three numbers on the right form an increasing arithmetic
progression with common difference $q-p$. Since $t>0$, the two
selected numbers $r-t$ and $r$ are distinct. If they are consecutive
terms, then $t=q-p$. In that case $-p-t=-q\in C$, contrary to
$0\notin C+C$. They must therefore be the first and last terms:
\begin{equation}\label{eq:endpoints}
r=2q,\qquad r-t=2p,\qquad t=2(q-p).
\end{equation}

Finally, consider a representation of $q$. Two negative summands
are impossible. If both summands are positive, each is smaller
than $q$, so both equal $p$ and $q=2p$.
If one summand is negative, the positive summand is greater than
$q$ and must be $r$. The negative summand would then be
$q-r=-q$, which is forbidden by $0\notin C+C$.
Thus $q=2p$. Equation~\eqref{eq:endpoints} now gives
$r=4p$ and $t=2p$. In particular, $-q-t=-r\in C$, the final
contradiction to $0\notin C+C$.
\end{proof}

\Needspace{12\baselineskip}
\section{Proof of the main theorem}\label{sec:proof}

We now treat seven-element sets that contain no symmetric subset
of cardinality six. The argument is by contradiction. Assuming that
no symmetric six-element subset exists, we first observe that every sum has at most two representations with distinct summands; otherwise three disjoint representations would produce such a subset. The counting identity of Lemma~\ref{lem:counting} then applies, and the
Cauchy--Schwarz estimate forces all but the largest difference to have multiplicity at least two. This pins down the second largest
difference as $L-u$ and constrains the third, leaving one of two reflected configurations. In the configuration $a_2=2u$,
$a_4<L-2u$, the sum $2u$ has a repeated summand, which sharpens the bound on $E$; a second Cauchy--Schwarz estimate then yields an impossible quadratic inequality.

The remainder of this section makes this outline precise.

\begin{proposition}\label{prop:core}
Let $A\subset\R$ have cardinality seven. If $A$ contains no symmetric subset of cardinality six, then $|A+A|\le |A-A|$.
\end{proposition}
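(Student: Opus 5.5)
We argue by contradiction, assuming that $A=\{a_0<a_1<\dots<a_6\}$ is sum-dominant and contains no symmetric six-element subset. After a translation we take $a_0=0$, and we put $L=a_6$. The first step bounds the representation counts. If some sum $s$ had $p_s\ge3$, its three representations with distinct summands would be pairwise disjoint, since a shared summand forces the other summands to agree. Their six elements would then form a set invariant under $y\mapsto s-y$, which is a symmetric six-element subset. Hence $p_s\le2$ for all $s$, and Lemma~\ref{lem:counting} gives $E=2R-h\le 2R$.

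Next we write sum-dominance in terms of $k$. Sum-dominance means $|A+A|\ge |A-A|+1=2k+2$, so $R\le 28-(2k+2)=26-2k$ and therefore $E\le 52-4k-h$. On the other side, \eqref{eq:basic-differences} gives $\sum_d m_d=21$. Let $j$ be the number of differences with $m_d=1$. Applying \eqref{eq:cauchy} to the remaining $k-j$ multiplicities, whose sum is $21-j$, gives
\[
E\ge \frac12\left(\frac{(21-j)^2}{k-j}-(21-j)\right).
\]
We compare this with $52-4k$ over the admissible range of $k$. The crude bound $E\ge 21-k$ already forces $k\le10$, and $k\ge6$ is trivial. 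We then check, case by case over the finitely many pairs $(k,j)$, that the inequality fails whenever $j\ge2$. Since the largest difference $L=a_6-a_0$ always has $m_L=1$, this shows $j=1$: every difference other than $L$ has multiplicity at least two.

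We now use the top differences. The second largest positive difference is $\max(a_6-a_1,a_5-a_0)$. After reflecting $A$ if necessary, it equals $L-u$ with $u=a_1$. Because $m_{L-u}\ge2$, there is a second representation, which can only be $a_5-a_0$; hence $a_5=L-u$. The third largest difference is one of $a_6-a_2=L-a_2$, $a_5-a_1=L-2u$, or $a_4-a_0=a_4$. Its multiplicity is also at least two, and matching its possible representations should leave, up to reflection, the configuration $a_2=2u$ with $a_4<L-2u$, together with a configuration that closes quickly. That second configuration should either produce three disjoint equal-sum pairs, and hence a symmetric six-subset, or a direct contradiction.

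In the configuration $a_2=2u$, the sum $2u=a_0+a_2=a_1+a_1$ has $p_s\ge1$ and $\varepsilon_s=1$. If $p_{2u}=1$ this contributes to $h$, so $h\ge1$. Otherwise $p_{2u}=2$, and we use that extra collision in \eqref{eq:collision} instead. Either way the upper bound on $E$ improves by at least one. A second application of \eqref{eq:cauchy}, now also using the known multiplicities of $L-u$ and of the third difference, should give a quadratic inequality in $k$ with no integer solution.

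The main obstacle is this last step. Here the margin is only one unit, so I would first enumerate the exact multiplicities forced on the top three differences, apply Cauchy--Schwarz only to the rest, and check each $k$ numerically.
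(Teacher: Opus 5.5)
Your proposal is correct and follows the paper's own route: $p_s\le2$ from the symmetric-subset argument, $E=2R-h$, and Cauchy--Schwarz showing that only $L$ is uniquely represented. Next come $a_5=L-u$ and the third-difference analysis, where $L-a_2=a_4$ gives the symmetric subset $\{0,u,a_2,a_4,L-u,L\}$, leaving $a_2=2u$, $a_4<L-2u$. Finally, $h\ge1$ and a second Cauchy--Schwarz on the $k-3$ multiplicities summing to $16$ give $128\le(57-4k)(k-3)$, which fails for every $k$.

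One small correction: your branch $p_{2u}=2$ cannot occur. The three smallest elements are $0,u,2u$, so $p_{2u}=1$ is forced. This matters, because a sum with $(p_s,\varepsilon_s)=(2,1)$ contributes nothing to $2R-E$, so that branch would not improve the bound as you claimed.
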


\begin{proof}
Suppose, to the contrary, that $A$ is sum-dominant.
Use the notation of Section~\ref{sec:counts}. If a sum $s$ had three representations with distinct summands, the corresponding pairs would be disjoint. Their six elements would form a subset preserved by $a\mapsto s-a$, contrary to the assumption. Hence $p_s\le2$ for every $s\in A+A$, and Lemma~\ref{lem:counting} gives
\begin{equation}\label{eq:seven-counts}
E=2R-h,\qquad \sum_{d\in D}m_d=21,\qquad R=28-|A+A|.
\end{equation}
There are at least six positive differences, obtained by subtracting the smallest element of $A$ from the other six elements. Thus $k\ge6$, so both $k-2$ and $k-3$ used below are positive.
Since $A$ is sum-dominant and cardinalities are integers,
\[
28-R=|A+A|\ge |A-A|+1=2k+2.
\]
It follows that
\begin{equation}\label{eq:Rbound}
R\le26-2k,\qquad E\le2R\le52-4k.
\end{equation}

We first prove that exactly one positive difference has a unique representation. Suppose that two distinct differences have
multiplicity one. Remove their multiplicities from the sum in \eqref{eq:seven-counts}. The remaining $k-2$ multiplicities have sum $19$. Since $\binom12=0$, inequality~\eqref{eq:cauchy} yields
\[
E\ge\frac12\left(\frac{19^2}{k-2}-19\right).
\]
Combining this with~\eqref{eq:Rbound}, and using $k-2>0$, gives
\[
361\le(123-8k)(k-2),
\qquad 8k^2-139k+607\le0.
\]
However,
\begin{equation}\label{eq:first-square}
8k^2-139k+607
=8\left(k-\frac{139}{16}\right)^2+\frac{103}{32}>0.
\end{equation}
This is a contradiction. The difference between the largest and smallest elements always has a unique representation, so it is the only such positive difference.

Translate and arrange the elements as
\[
A=\{0=a_0<a_1<a_2<a_3<a_4<a_5<a_6=L\}.
\]
Translation preserves sum and difference cardinalities, difference
multiplicities, and the absence of a symmetric six-element subset.
We determine the two largest positive differences below $L$.
After the pair $(L,0)$ is removed, the largest remaining difference is
\[
d_2=\max\{L-a_1,a_5\}.
\]
Only the pairs $(L,a_1)$ and $(a_5,0)$ can attain this maximum.
Indeed, a pair with larger entry $L$ and smaller entry at least
$a_2$ gives a difference strictly below $L-a_1$. A pair whose
larger entry is at most $a_5$ gives a difference at most $a_5$, with equality only for $(a_5,0)$.
Since $d_2$ cannot have a unique representation, these two candidate values must be equal. Put $u=a_1>0$. Then
\begin{equation}\label{eq:second}
a_5=L-u,\qquad d_2=L-u,\qquad m_{L-u}=2.
\end{equation}

After removing the representations of $L$ and $L-u$, the largest remaining difference is
\begin{equation}\label{eq:third}
d_3=\max\{L-a_2,L-2u,a_4\}.
\end{equation}
The candidate pairs are, respectively,
\[
(L,a_2),\qquad (L-u,u),\qquad (a_4,0).
\]
To justify that the list is exhaustive, consider any remaining pair.
If its larger entry is $L$, its smaller entry is at least $a_2$,
with equality only for the first candidate pair.
If its larger entry is $L-u=a_5$, its smaller entry is at least $u$,
with equality only for the second pair. All other pairs have larger
entry at most $a_4$ and smaller entry at least zero, so their difference is at most $a_4$, with equality only for the third pair.
Every noncandidate pair has difference strictly below $d_3$.

The three numbers in~\eqref{eq:third} are positive and smaller
than $L-u$. As $d_3$ is not uniquely represented, the maximum in
\eqref{eq:third} occurs at least twice. If $L-a_2=a_4$, then
\[
0+L=u+(L-u)=a_2+a_4=L.
\]
These pairs use the six distinct elements with indices $0,1,2,4,5,6$.
Their union is preserved by $a\mapsto L-a$, so it is a symmetric
six-element subset. We have excluded this possibility, including the case
where all three values in~\eqref{eq:third} agree.
Thus the remaining possibilities are
\begin{equation}\label{eq:two-configurations}
L-a_2=L-2u>a_4
\quad\hbox{or}\quad
a_4=L-2u>L-a_2.
\end{equation}
Reflection $A\mapsto L-A$ interchanges them. More explicitly, the
new ordered elements are $a'_i=L-a_{6-i}$, so $a'_1=u$,
$a'_2=L-a_4$, and $a'_4=L-a_2$.
In the second case, these satisfy $a'_2=2u$ and
$a'_4<L-2u$. We may therefore assume
\begin{equation}\label{eq:configuration}
a_2=2u,\qquad a_4<L-2u.
\end{equation}
Reflection also preserves the absence of a symmetric six-element
subset. It preserves positive difference multiplicities through
$(a,b)\mapsto(L-b,L-a)$, and it preserves $h$ because its action
on sums is $s\mapsto2L-s$, with distinct and equal summands
remaining distinct and equal, respectively.

It follows from the exhaustive list of candidate pairs that the
three largest positive differences have precisely the following
representations:
\[
\begin{array}{c|c|c}
\text{Difference}&\text{Representing ordered pairs}&\text{Multiplicity}\\ \hline
L&(L,0)&1\\
L-u&(L,u),\ (L-u,0)&2\\
L-2u&(L,2u),\ (L-u,u)&2
\end{array}
\]
For the last row, all pairs with larger entry at most $a_4$ give
a smaller difference. A pair with larger entry $L$ and smaller
entry at least $a_3$ gives a smaller difference because $a_3>2u$.
A pair with larger entry $L-u$ and smaller entry at least $2u$
also gives a smaller difference.

There is one more consequence of~\eqref{eq:configuration}.
The three smallest elements of $A$ are $0,u,2u$, and all remaining
elements exceed $2u$. Since all elements are nonnegative, the only
unordered representations of the sum $2u$ are
\[
2u=0+2u=u+u.
\]
Thus $p_{2u}=1$ and $\varepsilon_{2u}=1$, so $h\ge1$.
Together with~\eqref{eq:seven-counts} and~\eqref{eq:Rbound}, this gives
\begin{equation}\label{eq:improved}
E\le2R-1\le51-4k.
\end{equation}

We now remove the three largest positive differences, of multiplicities $1,2,2$, from the count. Their contribution to $E$ is $\binom{1}{2}+\binom{2}{2}+\binom{2}{2}=2$. The remaining $k-3$ multiplicities have sum $21-1-2-2=16$.
Applying~\eqref{eq:cauchy} to them gives
\[
E\ge2+\frac12\left(\frac{16^2}{k-3}-16\right)
=\frac{128}{k-3}-6.
\]
Combining this with~\eqref{eq:improved}, and using $k-3>0$, we obtain
\[
128\le(57-4k)(k-3),
\qquad 4k^2-69k+299\le0.
\]
This contradicts
\begin{equation}\label{eq:second-square}
4k^2-69k+299
=4\left(k-\frac{69}{8}\right)^2+\frac{23}{16}>0.
\end{equation}
The proposition follows.
\end{proof}

\begin{proof}[Proof of Theorem~\ref{thm:main}]
Let $A\subset\R$ have cardinality seven. If $A$ contains a symmetric
subset $B$ with $|B|=6$, write $A=B\cup\{x\}$ and apply
Proposition~\ref{prop:symmetric}. If it contains no such subset,
apply Proposition~\ref{prop:core}. In either case,
$|A+A|\le |A-A|$.
\end{proof}

\medskip
\noindent\textbf{Acknowledgements.}
This work was completed while the first author was visiting the Department of Mathematics at the National University of Singapore as a Visiting Senior Research Fellow under the Singapore Academies Southeast Asia Fellowship (SASEAF) Programme, within the project ``Math, Sobolev Space'' (Grant No. E-146-00-0039-01). The first author would like to express his sincere gratitude to the Department of Mathematics at NUS for its warm hospitality and excellent working environment. He also gratefully acknowledges the Singapore National Academy of Science (SNAS) and the National Research Foundation (NRF), Singapore, for their financial support through the SASEAF Programme.

\Needspace{24\baselineskip}

\end{document}